\newtheorem{theorem}{Theorem}[section]
\newtheorem{lemma}[theorem]{Lemma}
\newtheorem{definition}[theorem]{Definition}
\newtheorem{corollary}[theorem]{Corollary}
\newcommand{\RR}{\mathbb{R}}
\newcommand{\h}{\mathcal{H}^{m}}
\newcommand{\I}{\mathcal{I}}
\newcommand{\M}{MMP}
\newcommand{\diam}{\text{diam}}
\title[Restricted families of projections]{Restricted families of projections and random subspaces}
\author{Changhao Chen}
\address{School of Mathematics and Statistics, The University of New South Wales, Sydney NSW 2052, Australia }
\email{changhao.chenm@gmail.com}
\date{\today}
\subjclass[2010]{28A78, 28A80}
\keywords{Projections, Hausdorff dimension}
\thanks{The author acknowledges the support of the Vilho, Yrj\"o, and Kalle V\"ais\"al\"a foundation.}
\begin{document}

\begin{abstract}
We study the restricted families of orthogonal projections in $\RR^{3}$. We show that there are  families of  random subspaces which admit a Marstrand-Mattila type projection theorem. 
\end{abstract}

\maketitle

\section{Introduction}

A fundamental problem in fractal geometry is to determine how the projections affect dimension. Recall the classical Marstrand-Mattila projection theorem: Let $E\subset \RR^{n}, n\geq2,$ be a Borel set with Hausdorff dimension $s$. 
\begin{itemize}
\item (dimension part) If $s\leq m$, then the orthogonal projection of $E$ onto almost all $m$-dimensional subspaces has Hausdorff dimension $s$.

\item (measure part) If $s>m$, then the orthogonal of $E$ onto almost all $m$-dimensional subspaces has positive $m$-dimensional Lebesgue measure. 
\end{itemize}

In 1954 J. Marstand \cite{Marstrand} proved this projection theorem
in the plane. In 1975 	P. Mattila \cite{Mattila1975} 
proved this  for general dimension via   1968 R. Kaufman's  \cite{Kaufman} potential theoretic methods.   We refer to the recent survey of P. Mattila \cite{Mattila2017},  K. Falconer, J. Fraser, and X. Jin \cite{Falconer}  for more backgrounds. For monographs which  are related to orthogonal projections of fractal sets, we refer to K. Falconer \cite{Falconer2003},    
P. Mattila \cite{Mattila1995}, \cite{Mattila2015}.

In this paper, we study the restricted families of projections in Euclidean spaces.
Let $G(n,m)$ denote the collections of all the $m$-dimensional linear subspaces of $\mathbb{R}^{n}$. For $V\in G(n,m)$, let $\pi_{V}: \RR^{n}\rightarrow V$ stand for the orthogonal projections onto $V$. For $G\subset G(n,m)$, we  call $(\pi_{V})_{V\in G}$ a restricted family of projections. One of the problems is to look for some ``strict'' subset  $G\subset G(n,m)$ such that the Marstrand-Mattila type theorem holds for this restricted families of projections $(\pi_{V})_{V\in G}$. 

The best possible lower bounds for general restricted families of projections $(\pi_{V})_{V\in G}$  (here $G$ is a smooth subset of $G(n,m)$) were obtained by E. J\"arvenp\"a\"a, M. J\"arvenp\"a\"a, T. Keleti, F. Ledrappier and M. Leikas, see \cite{JJK} and \cite{JJLL}. 

What kind of subset $G\subset G(n,m)$ admit a better lower bound or even more such that the Marstrand-Mattila type theorem holds?  K. F\"assler and T. Orponen \cite[Conjecture 1.6]{FOO} conjectured that if $G$ has ``curvature condition'' then $G$ admit a Marstrand-Mattila type theorem.  The following discription of T. Orponen \cite{OrponenH} is helpful.   ``Informally speaking, one could   conjecture that any (smooth) subset  $G\subset G(n,m)$ such that no ``large part'' of $G$ contained in a single non-trivial subspace, should satisfy the Marstrand-Mattila projection theorem''.   A prototypical example of a curve with curvature condition is given by
\[
\Gamma=\{\frac{1}{\sqrt{2}}(\cos \theta, \sin \theta,1), \theta \in  [0,2\pi)\}.  
\]
Recently, A. K\"aenm\"aki, T. Orponen, and L. Venieri \cite{KOV} proved that the dimensional part of Marstrand-Mattila type theorem holds for the restricted families of projections for the curve $\Gamma$, which partially answered a conjecture of \cite[Conjecture 1.6]{FOO} for the curve $\Gamma$.  We refer to \cite{KOV} for more details and references  therein. For the restricted families of projections $\{\pi_{V_{e}}\}_{e\in \Gamma}$ where $V_{e}:=e^{\perp}$ the orthogonal complement space of $e$, we refer to \cite{OV} for more details and new improvement. We note that D.  Oberlin and R. Oberlin \cite{OO} applied the Fourier restricted estimates to these  restricted families of projections with the ``curvature condition''.

We note that the subsets  $G$ which were mentioned in the former  results are always some smooth subsets of $G(n,m)$.  In a talk of T. Orponen, he talked about the restricted families of projections over general subsets of $G(n,m)$. Furthermore, he also considered the random subsets of $G(2,1)$ for another topic which is related to orthogonal projections. In this paper, inspired by T. Orponen's talk, we study the restricted families of projections over random subset of $G(n,m)$.  We show that there exist  non-smooth (fractal)  subsets of $G(3,1)$ such that the Marstrand-Mattila type theorem holds on this restricted family of projections.  Here the random sets play the same role as the sets with curvature condition.

We note that the random sets play the same role as curvature condition in some other situations also, e.g., the restricted Fourier transform, see T. Mitsis \cite{M} and G. Mockenhaupt \cite{GM}.

\begin{definition}[$\M$ spaces]
Let $G\subset G(n,m)$ and $\gamma$ be a nonzero finite Borel measure on $G$. We call the pair $((\pi_{V})_{V\in G}, \gamma)$  a $\M$ space if the  Marstrand-Mattila projection theorem holds for the restricted families of projections $(\pi_{V})_{V\in G}$ with respect to the measure $\gamma$.
\end{definition}


By ``mapping'' a class of  random Cantor sets of P. Shmerkin and V. Suomala \cite{SS} onto the sphere $\mathbb{S}^{2}$, and combing some classical potential arguments for orthogonal projections, we obtain the following Theorem \ref{thm:main}. 

Let $x\neq 0$. Denoted by  $L_{x}$ the line through zero and the point $x$, and $L_{x}^{\perp}$ the orthogonal complement of $L_{x}$.   
For convenience, we may identify with subset of $\mathbb{S}^{2}$ with subset of $G(3,1)$ which  makes no confusion.

\begin{theorem}\label{thm:main}
For any $1<\alpha \leq  2$ there exists an $\alpha$- Ahlfors regular set $G\subset \mathbb{S}^{2}$ such that $((\pi_{L_{x}})_{x\in G}, \mathcal{H}^{\alpha})$ and $((\pi_{L_{x}^{\perp}})_{x\in G},  \mathcal{H}^{\alpha})$  are $\M$ spaces. 
\end{theorem}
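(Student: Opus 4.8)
The plan is to run the classical Kaufman--Mattila potential-theoretic scheme and to observe that, after choosing the right representation, \emph{both} assertions collapse to a single geometric estimate on the measure $\gamma:=\mathcal{H}^{\alpha}|_{G}$, namely the uniform great-circle bound
\begin{equation}\label{eq:gc}
\gamma\bigl(\{x\in\mathbb{S}^{2}:\operatorname{dist}(x,C)\le\delta\}\bigr)\lesssim\delta\qquad\text{for every great circle }C\subset\mathbb{S}^{2}\text{ and every }\delta>0 .
\end{equation}
Here it is decisive that $\alpha>1$: a great circle has codimension $1$ in $\mathbb{S}^{2}$, so \eqref{eq:gc} asks $\gamma$ to spread across circle-neighbourhoods no worse than a Lebesgue-like measure, which is exactly the ``random = uniformly spread'' phenomenon and the reason the Ahlfors exponent must exceed the codimension. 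Throughout, $\mu$ denotes a compactly supported measure on $E$ and $I_{s}(\mu)=\iint|u-v|^{-s}\,d\mu(u)\,d\mu(v)$ the $s$-energy. I will first show how \eqref{eq:gc} drives the two projection theorems, and then construct $G$ so that \eqref{eq:gc} holds.

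For the line family I work in physical space. Writing $z=|z|\hat z$ one has $|\pi_{L_{x}}(z)|=|z|\,|\langle x,\hat z\rangle|$, so for $s<1$,
\[
\int_{G}I_{s}(\pi_{L_{x}\#}\mu)\,d\gamma(x)=\iint|u-v|^{-s}\Bigl(\int_{G}|\langle x,\widehat{u-v}\rangle|^{-s}\,d\gamma(x)\Bigr)\,d\mu(u)\,d\mu(v).
\]
Since $|\langle x,\hat z\rangle|\le\delta$ means precisely that $x$ lies in the $\delta$-neighbourhood of the great circle $\hat z^{\perp}\cap\mathbb{S}^{2}$, the layer-cake formula and \eqref{eq:gc} bound the inner integral by $\int_{0}^{1}\delta\cdot\delta^{-s-1}\,d\delta\lesssim1$, uniformly in $\hat z$. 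Hence $\int_{G}I_{s}(\pi_{L_{x}\#}\mu)\,d\gamma\lesssim I_{s}(\mu)$, giving $\dim\pi_{L_{x}}E\ge s$ for $\gamma$-a.e.\ $x$ and, letting $s\uparrow\min(\dim E,1)$, the dimension part. For the measure part ($\dim E>1$) the same computation with the kernel $|\langle x,\widehat{u-v}\rangle|^{-1}$ and \eqref{eq:gc} controls $\int_{G}\|\pi_{L_{x}\#}\mu\|_{2}^{2}\,d\gamma$ by $I_{1}(\mu)<\infty$, so $\pi_{L_{x}\#}\mu\in L^{2}$ for $\gamma$-a.e.\ $x$.

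For the plane family the physical kernel $(1-\langle x,\hat z\rangle^{2})^{-s/2}$ degenerates at the poles $\pm\hat z$ and is \emph{not} governed by \eqref{eq:gc}; instead I pass to Fourier space, where the degeneration is spread along great circles. The $2$-dimensional transform of $\pi_{L_{x}^{\perp}\#}\mu$ is the restriction of $\widehat{\mu}$ to $L_{x}^{\perp}$, whose unit sphere is the great circle $C_{x}=x^{\perp}\cap\mathbb{S}^{2}$; passing to polar coordinates,
\[
\int_{G}I_{s'}(\pi_{L_{x}^{\perp}\#}\mu)\,d\gamma=c\int_{0}^{\infty}r^{s'-1}\Lambda(r)\,dr,\qquad \Lambda(r):=\int_{G}\int_{C_{x}}|\widehat{\mu}(r\theta)|^{2}\,d\mathcal{H}^{1}(\theta)\,d\gamma(x).
\]
The key point is that $\Lambda$ is a $\gamma$-averaged X-ray transform of $|\widehat\mu(r\cdot)|^{2}$ over great circles with poles in $G$; thickening $C_{x}$ to width $\delta$ and applying \eqref{eq:gc} in the dual variable (the circles through a fixed $\theta$ are exactly those with pole in $C_{\theta}$) gives $\Lambda(r)\lesssim\int_{\mathbb{S}^{2}}|\widehat\mu(r\theta)|^{2}\,d\mathcal{H}^{2}(\theta)$. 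Substituting back yields $\int_{G}I_{s'}(\pi_{L_{x}^{\perp}\#}\mu)\,d\gamma\lesssim I_{s'}(\mu)$ for every $s'<\dim E\le2$, the dimension part; the same estimate with the Plancherel weight ($s'=2$) bounds $\int_{G}\|\pi_{L_{x}^{\perp}\#}\mu\|_{2}^{2}\,d\gamma$ by $I_{2}(\mu)<\infty$ when $\dim E>2$, the measure part. Thus both theorems rest entirely on \eqref{eq:gc}.

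It remains to construct an $\alpha$-Ahlfors regular $G\subset\mathbb{S}^{2}$ satisfying \eqref{eq:gc}, and this is where I expect the real work. I will take one of the Shmerkin--Suomala random Cantor measures on a square, which are a.s.\ $\alpha$-Ahlfors regular and, crucially, satisfy a \emph{uniform} tube estimate $\mu(\ell^{(\delta)})\lesssim\delta^{\min(\alpha,1)}=\delta$ simultaneously for all lines $\ell$ (this subcritical, log-free bound is available precisely because $\alpha>1$), and transport it to a spherical cap by the inverse gnomonic projection, under which great circles correspond to straight lines and both Ahlfors regularity and \eqref{eq:gc} are preserved up to bi-Lipschitz constants. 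The main obstacle is exactly the word \emph{uniform}: I need \eqref{eq:gc} to hold for \emph{all} great circles at once, almost surely, for a single realization $G$, because the projection theorems must hold for every Borel $E$ and an adversarial $E$ could otherwise align its chord directions with an exceptional family of circles. Securing this all-circles-simultaneously bound is precisely what the spatially independent martingale machinery of Shmerkin--Suomala is designed to deliver; verifying that their hypotheses apply to the mapped construction, and transferring flat tube estimates cleanly to spherical circle-neighbourhoods, is the crux of the argument. Once \eqref{eq:gc} is in hand, the two theorems follow from the potential computations above.
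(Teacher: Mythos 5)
Your proposal is correct and follows essentially the same route as the paper: the same reduction of both projection families to the single great-circle estimate $\gamma(C^{(\delta)})\lesssim\delta$ (the paper's conditions $|\pi_{L}(\xi)|\le\rho$ and $d(\xi,L^{\perp})\le\rho$ describe exactly this neighbourhood), the same construction of $G$ by pushing a Shmerkin--Suomala $\alpha$-regular set with uniform tube control forward to $\mathbb{S}^{2}$ under central (inverse gnomonic) projection, and the same Kaufman/Mattila potential-theoretic lemmas to conclude. The only difference is presentational: the paper isolates the potential arguments as two standalone lemmas (proving the plane case via a cube decomposition of Fourier space), while you sketch the equivalent layer-cake and great-circle duality computations directly.
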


Recall that $E\subset \RR^{n}$ is called $\alpha$-Ahlfors regular for $0<\alpha\leq n$, if there exists a positive constant $C$ such that 
\[
r^{\alpha}/C\leq\mathcal{H}^{\alpha}(E\cap B(x,r))\leq Cr^{\alpha}
\]
for all $x\in E$ and $0 < r < \diam(E)$, where $\diam(E)$ denotes the diameter
of $E$. Note that for the case $\alpha=2$, Theorem \ref{thm:main} follows from the Marstrand-Mattila projection theorem. Thus we consider the case $\alpha \in (1, 2)$ only. 

I thank Tuomas Orponen for  pointing out that  the technique in the proof of Theorem \ref{thm:main} and  the random sets in papers \cite{Chend}, \cite{OOO} will imply the following result.

\begin{theorem}\label{thm:new}
 For any $0<\alpha \leq 1$ there exists a set $G\subset \mathbb{S}^{2}$ with $0<\mathcal{H}^{\alpha}(G)<\infty$ such that  $((\pi_{L_{x}})_{x\in G}, \mathcal{H}^{\alpha})$ admit a (dimension part)  Marstrand-Mattila type  theorem i.e., for any subset $E\subset \RR^{3}$ with $\dim_{H}E\leq \alpha$,
\begin{equation*}
\dim_{H}\pi_{L_{x}}(E)=\dim_{H}E \text{ for } \mathcal{H}^{\alpha}\,\, a.e.\, x\in G.
\end{equation*}
\end{theorem}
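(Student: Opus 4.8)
The plan is to run the standard potential-theoretic argument for the dimension part of a projection theorem, the only non-trivial input being a uniform energy estimate for the random measure on $G$ relative to the family of great circles. The upper bound $\dim_{H}\pi_{L_{x}}(E)\le\dim_{H}E$ is automatic for \emph{every} $x$, since each $\pi_{L_{x}}$ is $1$-Lipschitz; so only the lower bound needs work, and since $\dim_{H}E\le\alpha\le 1=\dim L_{x}$ we are always in the dimension-part regime.

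First I would fix the random set $G$. Following the construction used for Theorem~\ref{thm:main}, but feeding in the $\alpha$-dimensional random sets of \cite{Chend} and \cite{OOO} which cover the range $0<\alpha\le 1$, one obtains $G\subset\mathbb{S}^{2}$ with $0<\mathcal{H}^{\alpha}(G)<\infty$ together with a natural measure $\gamma$ comparable to $\mathcal{H}^{\alpha}|_{G}$. The property I would extract from the construction is the transversality estimate: almost surely, for every $s<\alpha$,
\begin{equation*}
M_{s}:=\sup_{w\in\mathbb{S}^{2}}\int_{G}\frac{d\gamma(x)}{|w\cdot x|^{s}}<\infty .
\end{equation*}
Identifying a unit vector $w$ with the great circle $\{x:w\cdot x=0\}$, this says exactly that $\gamma$ does not cluster near any great circle; concretely it follows from a uniform neighbourhood bound $\gamma(\{x:|w\cdot x|\le\delta\})\lesssim\delta^{s'}$ with $s<s'<\alpha$, via the layer-cake formula.

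Granting $M_{s}<\infty$, the remainder is routine. Fix $E$ with $t:=\dim_{H}E\le\alpha$ and fix $s<t$. By Frostman's lemma choose a probability measure $\mu$ on $E$ with finite $s$-energy $I_{s}(\mu)=\iint|y-z|^{-s}\,d\mu(y)\,d\mu(z)<\infty$. Since $|\pi_{L_{x}}(y)-\pi_{L_{x}}(z)|=|(y-z)\cdot x|$ for $x\in\mathbb{S}^{2}$, writing $y-z=|y-z|\,w$ with $w\in\mathbb{S}^{2}$ and applying Fubini gives
\begin{equation*}
\int_{G}I_{s}(\pi_{L_{x}}\mu)\,d\gamma(x)
=\iint\!\Big(\int_{G}\frac{d\gamma(x)}{|(y-z)\cdot x|^{s}}\Big)\,d\mu(y)\,d\mu(z)
\le M_{s}\,I_{s}(\mu)<\infty .
\end{equation*}
Hence $I_{s}(\pi_{L_{x}}\mu)<\infty$ for $\gamma$-a.e. $x$, so $\dim_{H}\pi_{L_{x}}(E)\ge s$ for $\mathcal{H}^{\alpha}$-a.e. $x\in G$. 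Letting $s$ run through a sequence increasing to $t$ yields $\dim_{H}\pi_{L_{x}}(E)\ge t$ a.e., and combined with the Lipschitz upper bound this gives $\dim_{H}\pi_{L_{x}}(E)=\dim_{H}E$ for $\mathcal{H}^{\alpha}$-a.e. $x\in G$.

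The main obstacle is the first step, i.e. establishing $M_{s}<\infty$ uniformly over the two-parameter family of great circles. The trivial covering bound is useless here, because for $\alpha\le 1$ a $\delta$-neighbourhood of a great circle can a priori carry up to $\delta^{\alpha-1}\to\infty$ worth of an $\alpha$-regular measure; beating it genuinely requires the randomness. I expect this to be handled exactly as in \cite{Chend}, \cite{OOO}: discretise the family of great circles to a $\delta$-net of cardinality $\approx\delta^{-2}$, bound the probability that the neighbourhood estimate fails for a single circle using the independence (martingale/cascade) structure of the construction together with a concentration inequality, and then take the per-circle failure probability small enough to survive the union bound as $\delta\to0$ along a dyadic sequence.
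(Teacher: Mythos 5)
Your proposal is correct and follows essentially the same route as the paper: you build $G$ by mapping the random planar sets of \cite{Chend} and \cite{OOO} to $\mathbb{S}^{2}$, and your transversality bound $\gamma(\{x:|w\cdot x|\le\delta\})\lesssim\delta^{s'}$ together with the layer-cake/Kaufman computation is exactly the content of the paper's Corollary \ref{cor:nn} combined with Lemma \ref{lem:low} (which the paper likewise quotes as the classical potential argument rather than reproving). The only difference is cosmetic: the paper establishes the tube bound in the plane and transfers it to great circles via the bi-Lipschitz map $F$, rather than rerunning the discretisation directly on the sphere as you sketch.
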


Recently there has been a growing interest in studying finite field version of some classical problems arising from Euclidean spaces. In \cite{ChenP}, the author  studied the  projections in vector spaces over finite fields, and obtained the Marstrand-Mattila type projection theorem in this setting.  For finite fields version of restricted families of projection, the author \cite{ChenRF} obtained that a random collection of subspaces admit a Marstrand-Mattila type theorem with high probability.  For more details  on finite fields version of projections, and  finite fields version of restricted families of projection, we refer to \cite{ChenP} and \cite{ChenRF}, respectively.


%
%
%
%


\medskip
\noindent{\bf Acknowledgements.} I am grateful to Tuomas Orponen for pointing out Theorem \ref{thm:new}.


\section{Preliminaries}\label{sec:p}

In this section we show some known lemmas for later use. The proofs of the following lemmas are based on the potential arguments.  For more details, we refer to  \cite[Chapter 6]{Falconer2003}, \cite[Chapter 9]{Mattila1995}, \cite[Chapter 5]{Mattila2015}. For Lemma \ref{lem:Mattilaf}, we provide an different  approach to \cite[Chapter 5]{Mattila2015}, and hence we show full details for it.

\begin{lemma}\label{lem:Mattila}
Let $G\subset G(n,m)$ and $\gamma$ be a  positive finite  Borel measure on $G$. If for any unit vector $\xi\in \RR^{n}$,
\begin{equation*}\label{eq:ll1}
 \gamma(\{V\in G: |\pi_{V}(\xi)|\leq \rho\})\lesssim \rho^{m},
\end{equation*} 
then  $((\pi_{V})_{V\in G}, \gamma)$ is a $\M$ space. 
\end{lemma}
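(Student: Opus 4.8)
The plan is to establish the two halves of the Marstrand--Mattila theorem --- the dimension part ($s\le m$) and the measure part ($s>m$) --- separately, in both cases through the classical potential-theoretic scheme in which the transversality hypothesis is precisely the input that forces the relevant integrals to converge. Throughout I write $I_{t}(\mu)=\iint |x-y|^{-t}\,d\mu(x)\,d\mu(y)$ for the $t$-energy of a measure $\mu$, and I freely use Frostman's lemma (a Borel set $E$ carries a nonzero compactly supported measure $\mu$ with $I_{t}(\mu)<\infty$ whenever $t<\dim_{H}E$) together with the fact that $\pi_{V}$ is $1$-Lipschitz, so that $\dim_{H}\pi_{V}(E)\le\dim_{H}E$ always and only the lower bounds require work. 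Combining the two cases for $\gamma$-a.e.\ $V$ yields the $\M$ space property by definition.

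For the dimension part, suppose $E\subset\RR^{n}$ has $\dim_{H}E=s\le m$ and fix any $t<s$; note $t<m$. Choosing a Frostman measure $\mu$ on $E$ with $I_{t}(\mu)<\infty$, I would push it forward and integrate the energy of the projected measures $\pi_{V\sharp}\mu$ against $\gamma$. By Fubini this reduces, after factoring $x-y=|x-y|\xi$ with $\xi$ a unit vector, to the uniform bound
\[
\int_{G} |\pi_{V}(\xi)|^{-t}\,d\gamma(V)\lesssim 1,
\]
which I would obtain from the layer-cake formula $\int_{G}|\pi_{V}(\xi)|^{-t}\,d\gamma(V)=\int_{0}^{\infty}\gamma(\{V:|\pi_{V}(\xi)|<\lambda^{-1/t}\})\,d\lambda$: splitting at $\lambda=1$ and applying the hypothesis on the tail gives a convergent integral exactly because $t<m$. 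Hence $\int_{G} I_{t}(\pi_{V\sharp}\mu)\,d\gamma(V)\lesssim I_{t}(\mu)<\infty$, so $I_{t}(\pi_{V\sharp}\mu)<\infty$, and therefore $\dim_{H}\pi_{V}(E)\ge t$, for $\gamma$-almost every $V$; letting $t\uparrow s$ through a sequence finishes this case.

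For the measure part, suppose $\dim_{H}E=s>m$ and pick $t\in(m,s)$ together with a compactly supported Frostman measure $\mu$ on $E$ satisfying $I_{t}(\mu)<\infty$ and $\mu(B(x,r))\lesssim r^{t}$; boundedness of the support also yields $I_{m}(\mu)<\infty$. The goal is to show $\pi_{V\sharp}\mu$ is absolutely continuous with an $L^{2}$ density for $\gamma$-almost every $V$, which forces $\mathcal{L}^{m}(\pi_{V}(E))>0$. Rather than passing through Fourier transforms, I would use the spatial characterization that such a density exists once $\liminf_{\delta\to0}\delta^{-m}\iint_{|\pi_{V}(x)-\pi_{V}(y)|<\delta}d\mu\,d\mu<\infty$, and then integrate in $V$. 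By Fatou and Fubini it suffices to bound, uniformly as $\delta\to0$,
\[
\delta^{-m}\iint \gamma\bigl(\{V:|\pi_{V}(x-y)|<\delta\}\bigr)\,d\mu(x)\,d\mu(y).
\]
Splitting into the ranges $|x-y|\ge\delta$ and $|x-y|<\delta$, the transversality hypothesis turns the first range into $\lesssim\iint|x-y|^{-m}\,d\mu\,d\mu=I_{m}(\mu)$, while the Frostman exponent $t>m$ makes the near-diagonal range $\lesssim\delta^{t-m}\to0$. This yields a finite bound independent of $\delta$, hence the density exists $\gamma$-a.e.

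The routine parts are the layer-cake estimate and the Fubini manipulations; the step I expect to require the most care is the measure part, where one must choose the correct quantitative form of the $L^{2}$-density criterion and split the double integral so that the hypothesis controls the off-diagonal contribution while the lower regularity $\mu(B(x,r))\lesssim r^{t}$ with $t>m$ annihilates the diagonal. Everything else is bookkeeping, and in both halves the single role of the hypothesis $\gamma(\{V:|\pi_{V}(\xi)|\le\rho\})\lesssim\rho^{m}$ is to supply exactly the power of $\rho$ needed for convergence at the critical exponent $m$.
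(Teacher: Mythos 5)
Your argument is correct, but it is worth noting that the paper does not actually write out a proof of this lemma: it cites the standard references and reserves its detailed argument for the companion Lemma \ref{lem:Mattilaf}, which it proves on the Fourier side by bounding $\int_{G}\int_{V}|\widehat{\mu}(x)|^{2}|x|^{t-m}\,d\mathcal{H}^{m}x\,d\gamma V$ via a decomposition of frequency space into small cubes and the Lipschitz continuity of $\widehat{\mu}$. Your route is the classical spatial one. For the dimension part you run Kaufman's energy argument, and the layer-cake computation giving $\int_{G}|\pi_{V}(\xi)|^{-t}\,d\gamma(V)\lesssim 1$ for $t<m$ is exactly the right use of the hypothesis. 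For the measure part you replace the Fourier-analytic criterion $\int_{V}|\widehat{\pi_{V\sharp}\mu}|^{2}\,d\mathcal{H}^{m}<\infty$ by the lower-density criterion $\liminf_{\delta\to0}\delta^{-m}\iint_{|\pi_{V}(x)-\pi_{V}(y)|<\delta}d\mu\,d\mu<\infty$, and your splitting into the ranges $|x-y|\ge\delta$ (controlled by the hypothesis together with $I_{m}(\mu)<\infty$) and $|x-y|<\delta$ (annihilated by the Frostman bound $\mu(B(x,r))\lesssim r^{t}$ with $t>m$) is the standard differentiation-of-measures proof as in Mattila's 1995 book. What your approach buys is that it avoids the Fourier transform entirely and is self-contained; what the paper's Fourier approach buys is that it transfers directly to the companion lemma, where the sublevel sets $\{V: d(\xi,V)\le\rho\}$ are naturally estimated on the frequency side. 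One cosmetic point: in the measure part you cannot in general insist on both $I_{t}(\mu)<\infty$ and $\mu(B(x,r))\lesssim r^{t}$ at the same exponent $t$, but since you only ever use the Frostman bound and its consequence $I_{m}(\mu)<\infty$, nothing in the argument is affected.
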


\begin{lemma}\label{lem:Mattilaf}
Let $G\subset G(n,m)$ and $\gamma$ be a  positive finite  Borel measure on $G$. If for any unit vector $\xi\in \RR^{n}$,
\begin{equation*}
 \gamma(\{V\in G: d(\xi, V)\leq \rho\})\lesssim \rho^{n-m},
\end{equation*} 
then  $((\pi_{V})_{V\in G},  \gamma)$ is a  $\M$ space.
\end{lemma}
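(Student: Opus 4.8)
The plan is to reduce Lemma \ref{lem:Mattilaf} to Lemma \ref{lem:Mattila} by passing to orthogonal complements. The key observation is that projecting onto $V$ and projecting onto $V^{\perp}$ are complementary operations, and the "distance to $V$" condition in Lemma \ref{lem:Mattilaf} should be exactly the "small projection onto $V^{\perp}$" condition in the hypothesis of Lemma \ref{lem:Mattila}. So first I would make the duality explicit: for a unit vector $\xi$ and a subspace $V \in G(n,m)$, the distance $d(\xi,V)$ from $\xi$ to the subspace $V$ equals the length of the component of $\xi$ orthogonal to $V$, i.e. $d(\xi,V) = |\pi_{V^{\perp}}(\xi)|$, where $V^{\perp} \in G(n,n-m)$. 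This is the geometric heart of the argument and I expect it to be essentially a one-line computation once stated correctly.

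\medskip

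Given this identity, the hypothesis of Lemma \ref{lem:Mattilaf}, namely $\gamma(\{V \in G : d(\xi,V) \leq \rho\}) \lesssim \rho^{n-m}$, rewrites as $\gamma(\{V \in G : |\pi_{V^{\perp}}(\xi)| \leq \rho\}) \lesssim \rho^{n-m}$. The natural next step is to transport the measure $\gamma$ from $G \subset G(n,m)$ to its image $G^{\perp} := \{V^{\perp} : V \in G\} \subset G(n,n-m)$ under the (bijective, smooth) orthogonal-complement map $V \mapsto V^{\perp}$. Let $\gamma^{\perp}$ denote the pushforward of $\gamma$ under this map. Then the condition becomes $\gamma^{\perp}(\{W \in G^{\perp} : |\pi_{W}(\xi)| \leq \rho\}) \lesssim \rho^{n-m}$ for every unit vector $\xi$, which is precisely the hypothesis of Lemma \ref{lem:Mattila} applied to the family $(\pi_{W})_{W \in G^{\perp}}$ with ambient projection dimension $n-m$. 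Lemma \ref{lem:Mattila} then yields that $((\pi_{W})_{W \in G^{\perp}}, \gamma^{\perp})$ is an MMP space.

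\medskip

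The final step is to translate this conclusion back to the original family. This requires checking that "$((\pi_{W})_{W \in G^{\perp}}, \gamma^{\perp})$ is an MMP space" is equivalent to "$((\pi_{V})_{V \in G}, \gamma)$ is an MMP space", where the projection dimension $n-m$ on the complement side matches the subspace dimension correctly. Here one must be careful about bookkeeping: the Marstrand-Mattila statement for $(n-m)$-dimensional projections applied through the pushforward should be reindexed back along $V \mapsto V^{\perp}$, and since the map is a measure-space isomorphism of $(G,\gamma)$ with $(G^{\perp},\gamma^{\perp})$, the almost-everywhere conclusions (on dimension or on positive measure of the image) carry over verbatim.

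\medskip

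\emph{The main obstacle} I anticipate is not the duality identity itself, which is elementary, but rather the careful matching of dimensions and the verification that the potential-theoretic conclusion is genuinely invariant under the complement map, so that no loss occurs when reindexing. In particular one should confirm that the exponent $n-m$ appearing in the hypothesis is exactly the one needed to drive Lemma \ref{lem:Mattila} in dimension $n-m$, and that the measure-theoretic pushforward does not disturb the Ahlfors-type or finiteness properties implicitly used. Once these points are in place, the lemma follows directly from Lemma \ref{lem:Mattila} with essentially no new analysis; the content is entirely in the orthogonal-complement duality. I would therefore present the proof as a short reduction, stating the distance identity as the single nontrivial step and invoking Lemma \ref{lem:Mattila} on the complementary family.
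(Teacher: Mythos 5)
Your reduction breaks down at the final step, and the failure is not a bookkeeping issue but a genuine logical gap. The duality identity $d(\xi,V)=|\pi_{V^{\perp}}(\xi)|$ is correct, and it does convert the hypothesis of Lemma \ref{lem:Mattilaf} for $G\subset G(n,m)$ into the hypothesis of Lemma \ref{lem:Mattila} for the complementary family $G^{\perp}\subset G(n,n-m)$ with the matching exponent $n-m$. But the conclusion you then obtain from Lemma \ref{lem:Mattila} is that $((\pi_{W})_{W\in G^{\perp}},\gamma^{\perp})$ is a $\M$ space, i.e.\ a Marstrand--Mattila theorem for the projections onto the $(n-m)$-dimensional subspaces $V^{\perp}$. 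This is a statement about entirely different projection operators from the ones in the lemma you are trying to prove, which concerns $\pi_{V}$ onto the $m$-dimensional subspaces $V$. There is no general equivalence between the two: for a fixed $V$, the behaviour of $\pi_{V}(E)$ and of $\pi_{V^{\perp}}(E)$ are independent (take $E\subset V$, so that $\pi_{V}(E)=E$ while $\pi_{V^{\perp}}(E)$ is a single point). The measure-space isomorphism $V\mapsto V^{\perp}$ lets you reindex the almost-everywhere quantifier, but the reindexed conclusion still reads ``for $\gamma$-a.e.\ $V$, $\dim_{H}\pi_{V^{\perp}}(E)=\min\{\dim_{H}E,\,n-m\}$'', not the desired statement about $\pi_{V}(E)$. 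Indeed, the paper needs both lemmas precisely because, for the same set $G\subset\mathbb{S}^{2}$, estimate \eqref{eq:l1} with Lemma \ref{lem:Mattila} handles the line projections $\pi_{L_{x}}$ while estimate \eqref{eq:l2} with Lemma \ref{lem:Mattilaf} handles the plane projections $\pi_{L_{x}^{\perp}}$; if your reduction were valid these would be the same assertion.

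The paper's actual proof is a direct potential-theoretic argument: starting from a measure $\mu$ on $E$ with $\I_{t}(\mu)<\infty$, it bounds $\int_{G}\int_{V}|\widehat{\mu}(x)|^{2}|x|^{t-m}\,d\h x\,d\gamma V$ by tiling the frequency space with cubes $Q_{j}$ of side $\rho$, using the Lipschitz continuity of $\widehat{\mu}$, and exploiting the hypothesis through the geometric estimate $\int_{G}\h(V\cap Q_{j})\,d\gamma V\lesssim \diam(Q_{j})^{m}\,\gamma(\{V: d(x_{j},V)\leq \diam(Q_{j})\})\lesssim \diam(Q_{j})^{n}|x_{j}|^{m-n}$, where the condition on $d(\xi,V)$ enters because an $m$-plane through the origin can meet a small cube far from the origin only if it passes close to that cube's location on the sphere. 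If you want to keep a short proof, you should follow this route (or the transversality formulation in \cite[Chapter 5]{Mattila2015}) rather than attempt to transfer the conclusion across the orthogonal-complement map.
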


The proofs depend on the following energy characterization of Hausdorff dimension. For a Borel set $E\subset \RR^{n}$,
\begin{equation*}
\begin{aligned}
\dim_{H} E= \sup\{&s:  \I_{s}(\mu)<\infty, \\&\text{ $\mu$ is a nonzero Radon measure with compact support on $E$ }\}
\end{aligned}
\end{equation*}
where $\I_{s}(\mu)=\int\int |x-y|^{-s}d \mu x d \mu y$. We also need the following identity which connects the fractal geometry and Fourier analysis,
\[
\I_{s}(\mu)\approx \int_{\RR^{n}}|x|^{s-n}|\widehat{\mu }(x)|^{2}d x.
\]
Here $\widehat{\mu}(x)=\int e^{-2 \pi i \langle x, y \rangle}d \mu (y)$ the Fourier transform of the measure $\mu$ at $x$. For more connections between fractal geometry and Fourier analysis, we refer to \cite{Mattila2015}.

\begin{proof}[Proof of Lemma \ref{lem:Mattilaf}]
Let $\dim_{H}E=s\leq m$. Then for any $0<t<s$ there exists a Radon measure $\mu$  on $E$ with compact support and 
\begin{equation}\label{eq:condition}
\I_{t}(\mu)\approx \int_{\RR^{n}}|\widehat{\mu}(x)|^{2}|x|^{t-n}d  x d \gamma V <\infty.
\end{equation}
It is sufficient to prove 
\[
\int_{G}\int_{V} |\widehat{\mu_{V}}(x)|^{2}|x|^{t-m}d \h x d \gamma V <\infty.
\]
Note that for any $V\in G(n,m)$ and $x\in V$, 
\[
\widehat{\mu_{V}}(x)=\widehat{\mu}(x).
\]
Since the measure $\mu$ is finite, we have that there is a positive constant $C$ such that for any $V\in G(n,m)$,
\[
\int_{B(0,1)}|\widehat{\mu_{V}}(x)|^{2}|x|^{t-m}d \h x \leq C<\infty.
\]
Thus it is sufficient to prove 
\[
\int_{G}\int_{V\cap B(0,1)^{c}} |\widehat{\mu}(x)|^{2}|x|^{t-m}d \h x d \gamma V <\infty,
\]
where $B(0,1)^{c}$ is the complement set of $B(0,1)$.  Let $0<\rho<1/10\sqrt{n}$. Define 
\[
\mathcal{Q}_{\rho}:=\{[k_{1}\rho, (k_{1}+1)\rho]\times \cdots\times[k_{n}\rho, (k_{n}+1)\rho]: k_{j}\in \mathbb{Z}, 1\leq j\leq n\}=\{Q_{j}\}_{j=1}^{\infty}.
\]
We consider the cubes which intersects $B(0,1)^{c}$ only. Thus we define   
\[
J=\{j: Q_{j}\cap B(0,1)^{c}\neq \emptyset, Q_{j}\in \mathcal{Q}_{\rho}\}.
\] 
Since $\mu$ is a Radon measure with  compact support, $\widehat{\mu}$  is a bounded Lipschitz continuous function, i.e.,
\[
|\widehat{\mu}(x)-\widehat{\mu}(y)|\lesssim |x-y|.
\]
It follows that for  each $Q_{j}, j\in J$ and any  $x, x'\in Q_{j}$,
\begin{equation}\label{eq:uu}
|\widehat{\mu}(x)|^{2}|x|^{t-m}\lesssim |\widehat{\mu}(x')|^{2}|x'|^{t-m}+\rho |x'|^{t-m}.
\end{equation}
For each $Q_{j}, j\in J$ let $x_{j}\in Q_{j}$ such that 
\[
|\widehat{\mu}(x_{j})|\leq |\widehat{\mu}(x)|  \text{ for any } x \in Q_{j}.
\] 

For any $R>1$ let $\rho=\rho_{R}=R^{-m}$. Then the estimate \eqref{eq:uu} implies that for  any $V\in G(n,m)$, 
\begin{equation}\label{eq:f1}
\begin{aligned}
&\int_{V\cap B(0,1)^{c}\cap B(0,R)} |x|^{t-m}|\widehat{\mu }(x)|^{2} d \h x \\
&\lesssim  \sum_{j\in J}|\widehat{\mu }(x_{j})|^{2}|x_{j}|^{t-m} \mathcal{H}^{m}(V \cap Q_{j}\cap B(0,R))+\rho \h(V\cap B(0,R))\\
&\lesssim\sum_{j\in J}|\widehat{\mu }(x_{j})|^{2}|x_{j}|^{t-m} \mathcal{H}^{m}(V \cap Q_{j})+1.
\end{aligned}
\end{equation} 
For any $Q_{j}, j\in J,$  we have 
\begin{equation}\label{eq:f2}
\begin{aligned}
&\int_{G}\h(V\cap Q_{j})d \gamma V \\
&\lesssim \diam(Q_{j})^{m}\gamma(\{V\in G: d(x_{j}, V)\leq \diam(Q_{j})\})\\
&\lesssim \diam(Q_{j})^{m} \left(\frac{\diam (Q_{j})}{|x_{j}|}\right)^{n-m} \lesssim \diam(Q_{j})^{n}|x_{j}|^{m-n}.
\end{aligned}
\end{equation}
Combining with  Fatou's lemma, the estimates \eqref{eq:f1}, \eqref{eq:f2}, and the condition \eqref{eq:condition}, we obtain
\begin{equation*}
\begin{aligned}
&\int_{G}\int_{V\cap B(0,1)^{c}\cap B(0,R)} |x|^{t-m}|\widehat{\mu }(x)|^{2} d\h x d \gamma V\\
&\lesssim \int_{G}\sum_{j\in J} |x_{j}|^{t-m}|\widehat{\mu }(x_{j})|^{2}\h(Q_{j}\cap V) d \gamma V+1\\
&\lesssim \sum_{j\in J} |x_{j}|^{t-m}|\widehat{\mu }(x_{j})|^{2}\int_{G}\h(V\cap Q_{j})d \gamma V +1\\
&\lesssim \sum_{j\in J} |x_{j}|^{t-n}|\widehat{\mu }(x_{j})|^{2}\diam(Q_{j})^{n}+1\\
&\lesssim \I_{t}(\mu) +1<\infty. 
\end{aligned}
\end{equation*}
Together with Fatou's lemma, we obtain
\begin{equation*}
\begin{aligned}
&\int_{G}\int_{V\cap B(0,1)^{c}} |x|^{t-m}|\widehat{\mu }(x)|^{2} d\h x d \gamma V\\
&=\int_{G}\int_{V\cap B(0,1)^{c}} \lim_{R\rightarrow \infty}{\bf 1}_{B(0,R)}(x)|x|^{t-m}|\widehat{\mu }(x)|^{2} d\h x d \gamma V\\
&\leq \liminf_{R\rightarrow \infty} \int_{G}\int_{V\cap B(0,1)^{c}} {\bf 1}_{B(0,R)}(x)|x|^{t-m}|\widehat{\mu }(x)|^{2} d\h x d \gamma V\\
&\lesssim \I_{t}(\mu) +1<\infty. 
\end{aligned}
\end{equation*}
Thus we complete the proof of the dimension part of Marstrand-Mattila type theorem.

Now we turn to  the measure part of Marstrand-Mattila type theorem.  Let $\dim_{H}E=s>m$, then there exists a Radon measure $\mu$ on $E$ with compact support $\I_{m}(\mu)<\infty$. A variant of the former argument implies that (using the same notation as above)
\begin{equation*}
\begin{aligned}
&\int_{G}\int_{V\cap B(0,1)^{c}\cap B(0,R)} |\widehat{\mu }(x)|^{2} d\h x d \gamma V\\
&\lesssim \int_{G}\sum_{j\in J} |\widehat{\mu }(x_{j})|^{2}\h(Q_{j}\cap V) d \gamma V+1\\
&\lesssim \sum_{j\in J} |\widehat{\mu }(x_{j})|^{2}\int_{G}\h(V\cap Q_{j})d \gamma V +1\\
&\lesssim \sum_{j\in J}| \widehat{\mu }(x_{j})|^{2}|x_{j}|^{m-n}\diam(Q_{j})^{n}+1\\
&\lesssim \I_{m}(\mu) +1<\infty.
\end{aligned}
\end{equation*}
It follows that $\int_{G}\int_{V\cap B(0,1)^{c}} |\widehat{\mu_{V} }(x)|^{2} d\h x d \gamma V <\infty$. Recall that if $\int_{\RR^{n}}|\widehat{\mu}(x)|^2 d x<\infty $, then $\mu$ is absolutely continuous to $\mathcal{H}^{n}$ (see \cite[Theorem 3.3]{Mattila2015}). Thus  we obtain that $\mu_{V}$ is absolutely continuous to  $\h$, and hence $\h(\pi_{V}(E))>0$ for $\gamma$ almost all $V\in G$. 
\end{proof}

\section{Proofs of Theorems \ref{thm:main}-\ref{thm:new}}\label{sec:main}

P. Shmerkin and V. Suomala \cite{SS} constructed the following sets. A tube $T\subset \RR^{2}$ with width $\delta$ means that $T$ is a $\delta$ neighbourhood of some line in $\RR^{2}$. 

\begin{theorem}\label{lem:nicesets}
For any $\alpha\in (1, 2)$ there exists an $\alpha$-Ahlfors regular compact set $E\subset \RR^{2}$, such that for any tube $T$ with width $w(T)$,
\begin{equation}\label{eq:niceone}
\mathcal{H}^{\alpha}(E\cap T)\lesssim w(T).
\end{equation}
\end{theorem}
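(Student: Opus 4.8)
The plan is to realize $E$ as the topological support of a random measure produced by a multiplicative cascade (a \emph{spatially independent martingale} in the sense of Shmerkin--Suomala), tuned so that its almost sure dimension is exactly $\alpha$, and then to extract the tube estimate \eqref{eq:niceone} from concentration of the cascade. Concretely, I would fix a large integer base $b$, take the base-$b$ grid on $[0,1]^{2}$, and define a martingale of densities $\mu_{n}=\mu_{n-1}R_{n}$ constant on squares of side $b^{-n}$, where the multipliers $R_{n}$ satisfy $\EE R_{n}=1$ (so that $\EE\mu=\mathcal{L}^{2}$, normalized Lebesgue measure on the square) and are \emph{independent across spatially separated squares} at scale $b^{-n}$. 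A convenient concrete choice is a percolation-type weight in which each child square is retained with probability $p=b^{\alpha-2}\in(0,1)$ and reweighted by $p^{-1}$; then $R_{n}\in\{0,p^{-1}\}$ has $\EE R_{n}=1$ and $\EE R_{n}^{q}=p^{1-q}<\infty$ for every $q$, the limiting dimension is $\alpha$, and (under the usual Kahane-type condition) the martingale converges in $L^{q}$ for some $q>1$.

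The first substantive step is the \textbf{expectation bound}: for a fixed tube $T$ of width $w$, $\EE\,\mu(T)=\mathcal{L}^{2}(T)\lesssim w$, simply because $\EE\mu=\mathcal{L}^{2}$ and a width-$w$ tube in the unit square has area $\lesssim w$. Note this is the right target: the bound $\lesssim w$ genuinely improves on the trivial regularity bound $w^{\alpha-1}$ since $w\le w^{\alpha-1}$. The second step upgrades this to a \textbf{tail bound for a single tube}. Using the spatial independence of the multipliers, $\mu(T)$ is, up to the cascade structure, a sum of weakly dependent contributions, and high moment (or exponential moment) estimates give a tail $\PP(\mu(T)\ge\lambda w)$ that is small and, crucially, sharpens at fine scales because a thin tube meets many independent cells.

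The main obstacle — and the heart of the Shmerkin--Suomala argument — is the \textbf{passage from a fixed tube to all tubes simultaneously}. I would discretize by scale: at scale $\delta=b^{-n}$ the tubes of width $\delta$ admit a net of cardinality $\lesssim\delta^{-C}$ (parametrized by direction and offset), and every tube of width $w\in[\delta,2\delta)$ is contained in boundedly many net tubes. The essential gain is that a width-$\delta$ tube meets $\sim\delta^{-1}$ cascade cells at scale $\delta$ whose multipliers are independent, so the per-tube tail improves with $n$; one arranges the estimate so that (number of net tubes)$\times$(failure probability) $=\delta^{-C}\exp(-c\,\delta^{-\eta})$ is summable in $n$. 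Borel--Cantelli then yields, almost surely, a finite constant with $\mu(T)\le C\,w(T)$ for every tube $T$ at once. The same cascade is $\alpha$-Ahlfors regular: the upper bound $\mu(B(x,r))\lesssim r^{\alpha}$ follows from the identical concentration applied to balls, and the lower bound $\mu(B(x,r))\gtrsim r^{\alpha}$ on the support from a second-moment / non-degeneracy argument showing the cascade survives with positive probability along every branch. Hence $\mathcal{H}^{\alpha}|_{E}$ is comparable to $\mu$ and \eqref{eq:niceone} follows. The two delicate points I expect to fight with are (i) choosing the multiplier law so that the $L^{q}$ bounds hold \emph{and} the dimension is exactly $\alpha$, and (ii) making the net argument quantitative enough that the per-scale union bound beats the $\delta^{-C}$ cardinality; the spatial independence of the cascade is precisely what makes (ii) go through.
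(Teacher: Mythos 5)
The paper itself offers no proof of this statement: it is quoted verbatim from Shmerkin--Suomala \cite{SS}, and your overall architecture (a spatially independent martingale, an expectation bound $\EE\,\mu(T)\lesssim w(T)$ for a fixed tube, super-polynomial tail bounds from independence across cells, a $\delta^{-C}$-net of tubes at each dyadic scale, and Borel--Cantelli) is exactly the architecture of the cited construction. You also correctly identify the two genuine pressure points: that $w(T)$ beats the trivial covering bound $w(T)^{\alpha-1}$, and that the per-tube failure probability must decay like $\exp(-c\delta^{-\eta})$ to beat the cardinality of the net.

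There is, however, one genuine gap: your concrete choice of multiplier law --- Bernoulli percolation, retaining each child square independently with probability $p=b^{\alpha-2}$ and reweighting by $p^{-1}$ --- does not produce an $\alpha$-Ahlfors regular set, and the statement explicitly requires Ahlfors regularity (the paper uses it to identify $\mathcal{H}^{\alpha}|_{E}$ with the random measure and, later, to get the regularity of $G\subset\mathbb{S}^{2}$). With independent retention the branching numbers fluctuate: for a surviving level-$n$ square $Q$ one has $\mu(Q)=b^{-2n}p^{-n}W_{Q}$ where $W_{Q}$ is a copy of the martingale limit, and since $W_{Q}$ has $0$ in the closure of its support (and is unbounded above), almost surely there are surviving squares violating both the uniform lower bound $\mu(Q)\gtrsim b^{-n\alpha}$ and the uniform upper bound. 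Your proposed remedy --- ``a second-moment / non-degeneracy argument showing the cascade survives with positive probability along every branch'' --- establishes non-extinction, not a uniform two-sided mass bound, so it cannot close this gap. The fix, which is what \cite{SS} actually do, is to make the branching number deterministic: at each step retain exactly $M\approx b^{\alpha}$ of the $b^{2}$ children, chosen uniformly at random and independently across squares. Then every surviving level-$n$ square carries mass exactly $M^{-n}\approx b^{-n\alpha}$, Ahlfors regularity holds by construction, the multipliers remain bounded with mean one, and the spatial independence you need for the Hoeffding-type concentration and the union bound over the net of tubes is preserved. With that substitution your outline matches the source argument.
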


By ``mapping'' the sets in  Theorem \ref{lem:nicesets} to sphere $\mathbb{S}^{2}$, we obtain the following Lemma \ref{lem:ls}. 

\begin{lemma}\label{lem:ls}
For any $\alpha\in (1,2)$ there exists an $\alpha$-Ahlfors regular compact set $G\subset \mathbb{S}^{2}$, such that for  any unit vector $\xi \in \RR^{3}$ and $\rho>0$, 
\begin{equation}\label{eq:l1}
\mathcal{H}^{\alpha}(\{L\in G: |\pi_{L}(\xi)|\leq \rho\})\lesssim \rho.
\end{equation}
It follows that for any unit vector $\xi \in \RR^{3}$ and $\rho>0$,
\begin{equation}\label{eq:l2}
\mathcal{H}^{\alpha}(\{L\in G: d(\xi, L^{\perp})\leq \rho\})\lesssim \rho.
\end{equation}
\end{lemma}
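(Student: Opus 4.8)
The plan is to transport the planar set $E$ of Theorem \ref{lem:nicesets} onto a spherical cap by the inverse gnomonic map, exploiting the classical fact that this map sends straight lines to great circles (and hence tubes to bands around great circles). After rescaling we may assume $E\subset B(0,r_{0})$ for a small absolute constant $r_{0}>0$ to be fixed below. Define
\[
\Phi(u,v)=\frac{(u,v,1)}{\sqrt{u^{2}+v^{2}+1}},\qquad (u,v)\in B(0,r_{0}),
\]
and set $G:=\Phi(E)\subset \mathbb{S}^{2}$. First I would record that $G$ is $\alpha$-Ahlfors regular: $\Phi$ is a smooth diffeomorphism from $B(0,r_{0})$ onto an open spherical cap with nonvanishing derivative, so on the compact set $\overline{B(0,r_{0})}$ both $\Phi$ and $\Phi^{-1}$ are Lipschitz; that is, $\Phi$ is bi-Lipschitz onto its image. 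Since bi-Lipschitz maps distort $\mathcal{H}^{\alpha}$ and distances by bounded factors, the $\alpha$-Ahlfors regularity of $E$ passes to $G$.

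The key computation rewrites the band condition as a tube condition. For a point $L=\Phi(u,v)\in G$, viewed as a unit vector, $|\pi_{L}(\xi)|=|\xi\cdot L|$, and writing $\xi=(\xi_{1},\xi_{2},\xi_{3})$ and $\eta=(\xi_{1},\xi_{2})$ we obtain
\[
|\pi_{L}(\xi)|=\frac{|\eta\cdot(u,v)+\xi_{3}|}{\sqrt{u^{2}+v^{2}+1}}.
\]
On $B(0,r_{0})$ the denominator is comparable to $1$, so the set $\{L\in G:|\pi_{L}(\xi)|\leq\rho\}$ is contained in the $\Phi$-image of $\{(u,v)\in E: |\eta\cdot(u,v)+\xi_{3}|\lesssim\rho\}$. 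When $|\eta|\geq 1/2$ this latter set is exactly $E$ intersected with a tube of width $\lesssim\rho/|\eta|\lesssim\rho$ around the line $\eta\cdot(u,v)+\xi_{3}=0$, and the tube estimate \eqref{eq:niceone} together with the bi-Lipschitz transfer yields the desired bound $\lesssim\rho$.

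The one delicate point — and the step I expect to require the most care — is the complementary regime $|\eta|<1/2$, i.e. $\xi$ nearly parallel to the pole $(0,0,1)$. Here $|\xi_{3}|>\sqrt{3}/2$, while $|\eta\cdot(u,v)|\leq r_{0}/2$ on $B(0,r_{0})$, so $|\eta\cdot(u,v)+\xi_{3}|\geq \sqrt{3}/2-r_{0}/2\geq 1/2$ once $r_{0}$ is chosen small; consequently the set $\{L\in G:|\pi_{L}(\xi)|\leq\rho\}$ is empty for every $\rho$ below an absolute constant $c_{0}$, and the bound holds vacuously. For $\rho\geq c_{0}$ the inequality is trivial, since $\mathcal{H}^{\alpha}(G)<\infty$ is a fixed constant and hence $\lesssim\rho$. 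This pins down the choice of $r_{0}$ and establishes \eqref{eq:l1} uniformly in $\xi$ and $\rho$.

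Finally, \eqref{eq:l2} is immediate from \eqref{eq:l1}. Since $L^{\perp}$ is the orthogonal complement of the line $L$, the nearest point of $L^{\perp}$ to $\xi$ is $\pi_{L^{\perp}}(\xi)$, so $\xi-\pi_{L^{\perp}}(\xi)=\pi_{L}(\xi)$ gives $d(\xi,L^{\perp})=|\pi_{L}(\xi)|$; thus the two sets in \eqref{eq:l1} and \eqref{eq:l2} coincide, and the same estimate applies. The only genuine input beyond bookkeeping is the planar tube bound \eqref{eq:niceone}, which Theorem \ref{lem:nicesets} supplies; the rest is the gnomonic change of variables and the elementary case split above.
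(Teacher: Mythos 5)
Your proposal is correct and follows essentially the same route as the paper: both transport the planar set of Theorem \ref{lem:nicesets} onto a spherical cap by a bi-Lipschitz radial/gnomonic-type map that carries great circles to lines, so that the band $\{|\pi_{L}(\xi)|\leq\rho\}$ pulls back into a tube of width $\lesssim\rho$ and the estimate \eqref{eq:niceone} applies, with \eqref{eq:l2} following from $d(\xi,L^{\perp})=|\pi_{L}(\xi)|$. Your explicit coordinate computation and the case split for $|\eta|<1/2$ (where the band misses the cap entirely for small $\rho$) is a degenerate situation the paper's more geometric phrasing passes over silently, so that extra care is welcome rather than a deviation.
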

\begin{proof}
By Theorem \ref{lem:nicesets} there exists an $\alpha$-Ahlfors regular compact set $E\subset [-1/10, 1/10]^{2}$
such that for any tube $T$,
\[
\mathcal{H}^{\alpha}(E\cap T)\lesssim w(T).
\]
Let $\tilde{E}=E+(0,0,1/2)$ and  $G:=\{\frac{x}{|x|}: x\in \tilde{E}\}$. We intend to prove that $G$ satisfy our need.

Note that  $G$ is the image of $\tilde{E}$ under the map $F: x \rightarrow \frac{x}{|x|}$ for $x\neq 0$. In the following, we restrict the map $F$ to the set $[-1/10, 1/10]^{2}+(0,0,1/2):=S$. Then $F$ is a bi-Lipschitz map, i.e.,
\[
|F(x)-F(y)|\approx |x-y|, \, \, x, y\in S.
\]
Furthermore, $F^{-1}$ map the ``big circle'' to some ``segment'' on $S$, i.e., for any plane $W\in G(3,2)$ there exists a line $\ell_{W}$ such that 
\[
F^{-1}(W\cap \mathbb{S}^{2})=\ell_{W}\cap S. 
\]
Combining with the bi-Lipschitz of the map $F$, we conclude that \[
F^{-1}(\{L\in G: |\pi_{L}(\xi)|\leq \rho\})\subset \{x\in \tilde{E} : d(x, \ell_{\xi^{\perp}})\lesssim \rho\}
\]  
where $\ell_{\xi^{\perp}}=F^{-1}(\xi^{\perp})$.
Therefore 
\[
\mathcal{H}^{\alpha}(\{L\in G: |\pi_{L}(\xi)|\leq \rho\})\approx \mathcal{H}^{\alpha} (F^{-1}(\{L\in G: |\pi_{L}(\xi)|\leq \rho\}))\lesssim \rho.
\]  
  
The estimate \eqref{eq:l2} follows by $d(\xi, L^{\perp})=\pi_{L}(\xi)$, thus we complete the proof.
\end{proof}

Then Theorem \ref{thm:main} follows by  combining  Lemma \ref{lem:ls}
 with Lemmas \ref{lem:Mattila}-\ref{lem:Mattilaf}. More precisely, the estimate \eqref{eq:l1} and the Lemma \ref{lem:Mattila} implies that $((\pi_{L_{x}})_{x\in G}, \mathcal{H}^{\alpha})$ is a $\M$ space. The estimate \eqref{eq:l2} and the lemma \ref{lem:Mattilaf} implies that $((\pi_{L_{x}^{\perp}})_{x\in G},  \mathcal{H}^{\alpha})$ is a  $\M$ space.

Now we turn to the proof of Theorem \ref{thm:new}. The method is similar to the proof of Theorem \ref{thm:main}. We  ``mapping'' some random sets  of plane to sphere $\mathbb{S}^{2}$, and then  applying the classical potential arguments for these restricted families of projections.   First note  that the classical potential arguments implies the following result, see the arguments in\cite[Section 3]{Falconer} or the proof in \cite[Theorem 5.1]{Mattila2015}.

\begin{lemma}\label{lem:low}
Let $G\subset G(n,m)$ and $\gamma$ be a  positive finite  Borel measure on $G$. If for any unit vector $\xi\in \RR^{n}$,
\[
 \gamma(\{V\in G: |\pi_{V}(\xi)|\leq \rho\})\lesssim \rho^{\alpha},
\] 
where $\alpha$ is a positive constant, then $((\pi_{V})_{V\in G}, \gamma)$ admit a (dimension part)  Marstrand-Mattila type  theorem i.e., for any subset $E\subset \RR^{n}$ with $\dim_{H}E\leq \min\{\alpha, m\}$, we have
\[
\dim_{H}\pi_{V}(E)=\dim_{H}E \text{ for }\gamma\,\, a.e. V\in G.
\]
\end{lemma}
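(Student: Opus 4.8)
The plan is to run the standard potential-theoretic (energy) argument, which is exactly the dimension part of Lemma \ref{lem:Mattila} with the exponent $m$ replaced by $\alpha$. The upper bound $\dim_{H}\pi_{V}(E)\leq \dim_{H}E$ holds for \emph{every} $V$ because each $\pi_{V}$ is $1$-Lipschitz and Lipschitz maps do not increase Hausdorff dimension, so the entire content is the matching lower bound for $\gamma$-almost every $V$.

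I would set $s=\dim_{H}E\leq\min\{\alpha,m\}$ and fix an arbitrary $t<s$. By the energy characterization of Hausdorff dimension quoted above, there is a nonzero compactly supported Radon measure $\mu$ on $E$ with $\I_{t}(\mu)<\infty$. Writing $\mu_{V}$ for the push-forward $(\pi_{V})_{\#}\mu$ on $V$, I would estimate the averaged energy by Tonelli and the linearity $\pi_{V}(x)-\pi_{V}(y)=\pi_{V}(x-y)$:
\[
\int_{G}\I_{t}(\mu_{V})\,d\gamma V=\int\!\!\int\left(\int_{G}|\pi_{V}(x-y)|^{-t}\,d\gamma V\right)d\mu x\,d\mu y.
\]
For fixed $x\neq y$, writing $\xi=(x-y)/|x-y|$ and factoring $|\pi_{V}(x-y)|^{-t}=|x-y|^{-t}\,|\pi_{V}(\xi)|^{-t}$ reduces the whole problem to a single bound, uniform over unit vectors $\xi$, on the quantity $\int_{G}|\pi_{V}(\xi)|^{-t}\,d\gamma V$.

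This inner estimate is the crux, and it is the only place where the hypothesis and the restriction $t<\alpha$ are used. Via the layer-cake formula,
\[
\int_{G}|\pi_{V}(\xi)|^{-t}\,d\gamma V=\int_{0}^{\infty}\gamma\big(\{V\in G:|\pi_{V}(\xi)|<\lambda^{-1/t}\}\big)\,d\lambda,
\]
I would split the $\lambda$-integral at $1$: on $0<\lambda<1$ bound the distribution function by the total mass $\gamma(G)<\infty$, and on $\lambda\geq 1$ invoke the hypothesis to obtain $\gamma(\{V:|\pi_{V}(\xi)|<\lambda^{-1/t}\})\lesssim\lambda^{-\alpha/t}$. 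Since $t<s\leq\alpha$ forces $\alpha/t>1$, the tail integral $\int_{1}^{\infty}\lambda^{-\alpha/t}\,d\lambda$ converges, giving $\int_{G}|\pi_{V}(\xi)|^{-t}\,d\gamma V\lesssim\gamma(G)+1$ uniformly in $\xi$. I expect this to be the main (indeed essentially the only) obstacle: the integral diverges once $t\geq\alpha$, which is exactly why the conclusion must be confined to sets of dimension at most $\alpha$.

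Feeding this uniform bound back into the displayed double integral yields $\int_{G}\I_{t}(\mu_{V})\,d\gamma V\lesssim \I_{t}(\mu)<\infty$, so $\I_{t}(\mu_{V})<\infty$ for $\gamma$-almost every $V$, whence $\dim_{H}\pi_{V}(E)\geq t$ for $\gamma$-almost every $V$. Finally I would take a sequence $t_{n}\uparrow s$ and discard the countable union of the corresponding $\gamma$-null exceptional sets, which is again $\gamma$-null, to deduce $\dim_{H}\pi_{V}(E)\geq s$ for $\gamma$-almost every $V$. Combined with the trivial upper bound and the fact that $s\leq m$ guarantees $\pi_{V}(E)$ can attain dimension $s$ inside the $m$-plane $V$, this gives the claimed equality almost everywhere.
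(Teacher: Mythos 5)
Your proposal is correct and is essentially the same argument the paper relies on: the paper does not write out a proof of this lemma but refers to the classical potential-theoretic argument of Kaufman type (citing the survey of Falconer--Fraser--Jin and Mattila's Theorem 5.1), which is precisely your energy/Tonelli/layer-cake computation with the exponent $\alpha$ in place of $m$, convergent exactly because $t<s\leq\alpha$ gives $\alpha/t>1$. The only cosmetic remark is that the final appeal to ``$s\leq m$ guarantees $\pi_{V}(E)$ can attain dimension $s$'' is unnecessary: the lower bound from the energy estimate together with the Lipschitz upper bound already yields the equality.
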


T. Orponen \cite{OOO} constructed the following sets.  
\begin{theorem} 
For any $0<\alpha<1$ there exists a compact set $E\subset [0,1]^{2}$ with $0<\mathcal{H}^{\alpha}(E)<\infty$, such that 
such that for any tube $T$ with width $w(T)$,
\[
\mathcal{H}^{\alpha}(E\cap T)\lesssim w(T)^{\alpha}.
\]
\end{theorem}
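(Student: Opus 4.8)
The plan is to obtain $E$ as the (random) limit set of a spatially independent martingale in the spirit of Shmerkin and Suomala \cite{SS}, and to read off the tube estimate from the branching structure of the construction. Fix a large integer base $M$ and retain $K:=\lceil M^{\alpha}\rceil$ subsquares at each step, so that the prospective dimension $\log K/\log M$ is close to $\alpha$; because $\alpha<1$ we may take $M$ so large that $\beta:=2K/M<1$. Starting from $[0,1]^{2}$, at each generation I subdivide every surviving square of side $M^{-n}$ into $M^{2}$ congruent subsquares and keep $K$ of them, the choices being independent across squares and across generations (a small random rotation or translation can be inserted at each step to destroy axis alignments). Let $\mu$ be the natural limiting measure giving mass $K^{-n}$ to each surviving generation-$n$ square, and $E:=\operatorname{supp}\mu$. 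Two things must be checked: that $0<\mathcal{H}^{\alpha}(E)<\infty$, and that $\mathcal{H}^{\alpha}(E\cap T)\lesssim w(T)^{\alpha}$ for every tube $T$.

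The first point is routine for such constructions: the uniform mass distribution yields $\mu(B(x,r))\approx r^{\alpha}$ by a standard martingale/Frostman argument, whence (almost surely) $E$ is $\alpha$-Ahlfors regular and in particular $0<\mathcal{H}^{\alpha}(E)<\infty$. The substance is the tube bound, which I would reduce to a counting statement. Fix a tube $T$ of width $w\approx M^{-m}$ and let $N_{T}$ denote the number of surviving generation-$m$ squares that meet $T$. Since each such square carries mass $K^{-m}=w^{\alpha}$, one has $\mu(E\cap T)\approx K^{-m}N_{T}=w^{\alpha}N_{T}$, so the theorem amounts to the uniform bound $N_{T}\lesssim 1$. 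A line meets at most $\lesssim M$ of the $M^{2}$ children of any square, so the hit survivors form a subcritical Galton--Watson type process with mean offspring $\lesssim\beta<1$; consequently
\[
\EE[N_{T}]\lesssim \beta^{m}\longrightarrow 0,
\]
and in particular $\EE[\mu(E\cap T)]\lesssim w^{\alpha}$ for each fixed $T$.

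It remains to pass from a fixed tube to all tubes simultaneously. At width scale $M^{-m}$ there are only $\lesssim M^{2m}$ tubes up to the resolution $w$ (take an $M^{-m}$-net of directions and offsets; a general tube is comparable to a member of the net), so I would combine a concentration estimate for $N_{T}$ with a union bound over this net and then Borel--Cantelli over the scales $m\in\NN$. The delicate point, and the one I expect to be the \emph{main obstacle}, is obtaining concentration for $N_{T}$ strong enough to defeat the $\sim M^{2m}$ competing tubes without sacrificing the exponent: the first moment alone does not suffice, and naive subcritical tail bounds lose a factor $\log(1/w)$. This is exactly where the spatial independence of the construction enters, through high-moment (equivalently exponential-moment) estimates for the tree-indexed sums $N_{T}$, forcing $\PP(N_{T}\ge C)$ to decay fast enough in $m$; such estimates are the technical heart of the martingale machinery of \cite{SS} and of the constructions in \cite{Chend,OOO}.

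Finally, on an event of positive probability both the regularity and the uniform tube bound hold, and any realization of $E$ in this event has all the required properties. An alternative, essentially equivalent, route would be to deduce the tube estimate from Salem-type Fourier decay $|\widehat{\mu}(\xi)|\lesssim|\xi|^{-\alpha/2+\varepsilon}$ of the random measure $\mu$, which the same construction provides.
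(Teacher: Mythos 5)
You should first be aware that the paper does not prove this statement: it is quoted verbatim from Orponen's paper \cite{OOO} (and the neighbouring remarks about $\alpha=1$ from \cite{Chend}), so there is no internal proof to compare against and I am judging your construction on its own merits.

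The central problem is that your construction, as specified, provably fails the conclusion: the logarithmic loss you flag as the ``main obstacle'' is intrinsic to the homogeneous construction and cannot be removed by sharper concentration inequalities. Concretely, fix $j\ge 1$ and, for a surviving generation-$n$ square $Q$, let $B_j(Q)$ be the event that every square of the $j$-generation subtree below $Q$ places all $K$ of its retained children inside one column of its own $M\times M$ grid, with all these columns aligned. Then $\PP(B_j(Q))\ge q_j>0$ with $q_j$ depending only on $M,K,j$; the events are independent across the $K^{n}$ surviving generation-$n$ squares; and $B_j(Q)$ forces the entire mass $\mu(Q)=K^{-n}$ into a tube $T_Q$ of width $w=M^{-(n+j)}$, so that $\mu(T_Q)\ge K^{j}w^{\alpha'}$ with $\alpha'=\log K/\log M$. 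Since $(1-q_j)^{K^{n}}\to 0$, almost surely such a $Q$ exists for every large $n$, and letting $j\to\infty$ gives $\sup_T \mathcal{H}^{\alpha'}(E\cap T)/w(T)^{\alpha'}=\infty$ (the supremum in fact grows like $\log(1/w)$). In branching-process language, $\PP(N_T\ge C)\gtrsim_C \beta^{m}$ for every constant $C$ (survive to generation $m-j$ inside $T$, then align a depth-$j$ subtree), so your union bound over the $\asymp M^{2m}$ tubes of the net is $\gtrsim M^{(1+\alpha)m}$ and can never close; no moment estimate will prove a false statement. This is precisely why the constructions in \cite{OOO} and \cite{Chend} are inhomogeneous: the subdivision ratios $M_n$ grow with $n$, so the per-square alignment probability, roughly $M_n^{-cK_n}$, decays fast enough to Borel--Cantelli away all clustering --- at the price of Ahlfors regularity, which is consistent with the theorem claiming only $0<\mathcal{H}^{\alpha}(E)<\infty$, in contrast with the Shmerkin--Suomala statement for $\alpha>1$. (For $\alpha>1$ the target bound is $w(T)^{1}$ and the same clustering yields only $\mu(T)/w\approx M^{(1-\alpha)m+\alpha j}\to 0$, which is why the homogeneous construction of \cite{SS} is fine there; your proposal imports the wrong regime.)

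Two smaller points. With $K=\lceil M^{\alpha}\rceil$ fixed, the limit set has dimension $\log K/\log M\ne\alpha$ in general, so $\mathcal{H}^{\alpha}(E)$ is $0$ or $\infty$; the number of retained squares must vary with the generation to calibrate the measure. And the ``alternative route'' via Salem-type decay $|\widehat{\mu}(\xi)|\lesssim|\xi|^{-\alpha/2+\varepsilon}$ does not recover the statement: for a projection $\nu=\pi_{e}\mu$ it only yields $\nu(B(x,w))\lesssim w^{\alpha/2-\varepsilon}$, i.e.\ $\mathcal{H}^{\alpha}(E\cap T)\lesssim w(T)^{\alpha/2-\varepsilon}$, losing half the exponent.
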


Note that for any subset $E\subset \RR^{2}$ with $0<\mathcal{H}^{1}(E)<\infty$,
\[
\sup _{T} \frac{\mathcal{H}^{1}(E\cap T)}{w(T)}=\infty
\] 
where the supremum is over all tubes $T$ with width $w(T) > 0$. For more details, see \cite{OOO}. For the case $\alpha=1$, the author \cite{Chend} constructed the following set which settles a question of T. Orponen. There exists a compact set $E\subset [0,1]^{2}$ with $0<\mathcal{H}^{1}(E)<\infty$ such that for any $s<1$, and  for any tube $T$ with width $w(T)$,
\begin{equation}\label{eq:nicethree}
\mathcal{H}^{1}(E\cap T)\lesssim_{s} w(T)^{s}.
\end{equation}
Here $\lesssim_{s}$ means that the constant depends on $s$.

We mapping the above  sets to the sphere $\mathbb{S}^{2}$ in the same way as Lemma \ref{lem:ls}, and the similar arguments implies the following result. 
\begin{corollary}\label{cor:nn}
For any $0<\alpha\leq 1$ there exists a compact set $G\subset \mathbb{S}^{2}$ with $0<\mathcal{H}^{\alpha}(G)<\infty$, such that for any $s<\alpha$, and for any unit vector $\xi\in \RR^{3}$,
\[
\mathcal{H}^{\alpha}(\{L\in G: |\pi_{L}(\xi)|\leq \rho\})\lesssim_{s} \rho^{s}.
\] 
Note that for the case $0<\alpha<1$, we  have the following stronger estimate
\[
\mathcal{H}^{\alpha}(\{L\in G: |\pi_{L}(\xi)|\leq \rho\})\lesssim \rho^{\alpha}.
\]
\end{corollary}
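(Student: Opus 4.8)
The plan is to transfer the planar constructions to the sphere by the same stereographic-type map $F(x)=x/|x|$ used in Lemma \ref{lem:ls}, and then read off the stated tube estimate as a distance estimate on the sphere. Concretely, for each $\alpha\in(0,1)$ I take the compact set $E\subset[0,1]^{2}$ from the quoted theorem of Orponen \cite{OOO} (and for the endpoint $\alpha=1$ the set of \cite{Chend} satisfying \eqref{eq:nicethree}), translate it by $(0,0,1/2)$ to obtain $\tilde E\subset S:=[0,1]^{2}+(0,0,1/2)$, and set $G=F(\tilde E)=\{x/|x|:x\in\tilde E\}$. Since $F$ restricted to $S$ is bi-Lipschitz, $G$ is compact with $0<\mathcal{H}^{\alpha}(G)<\infty$, exactly as in Lemma \ref{lem:ls}.

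The next step is the key geometric reduction, which is already established inside the proof of Lemma \ref{lem:ls}: for any unit vector $\xi$ the preimage of the bad set is contained in a thin neighbourhood of a line,
\[
F^{-1}(\{L\in G:|\pi_{L}(\xi)|\le\rho\})\subset\{x\in\tilde E:d(x,\ell_{\xi^{\perp}})\lesssim\rho\},
\]
where $\ell_{\xi^{\perp}}=F^{-1}(\xi^{\perp}\cap\mathbb{S}^{2})$ is a line segment and the implied constant is uniform in $\xi$. The right-hand side is precisely the intersection of $\tilde E$ with a tube $T$ of width $w(T)\approx\rho$. Hence, using bi-Lipschitz invariance of $\mathcal{H}^{\alpha}$ under $F$ together with the tube estimate for the planar set, I obtain
\[
\mathcal{H}^{\alpha}(\{L\in G:|\pi_{L}(\xi)|\le\rho\})\approx\mathcal{H}^{\alpha}(\tilde E\cap T)\lesssim w(T)^{s}\lesssim\rho^{s}
\]
for any $s<\alpha$ (and the sharp exponent $\rho^{\alpha}$ when $\alpha\in(0,1)$, from Orponen's theorem). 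This gives both displayed inequalities of the corollary.

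The main obstacle is purely bookkeeping rather than conceptual: one must verify that the bi-Lipschitz constants of $F|_{S}$ and the distortion of the foliation by great-circle preimages are genuinely \emph{uniform} over all unit vectors $\xi$, so that the implied constants in the tube estimate do not degenerate. Because $S$ is a fixed compact patch bounded away from the origin, $F$ and $F^{-1}$ have bounded derivatives on $S$, and the preimage of every great circle is a straight segment of controlled width; thus the uniformity holds, and the same argument that proved \eqref{eq:l1} applies verbatim with the polynomial rate $\rho$ replaced by $\rho^{s}$ (respectively $\rho^{\alpha}$). Finally, I would note that for $\alpha\in(0,1)$ the sharp estimate $\mathcal{H}^{\alpha}(\{L\in G:|\pi_{L}(\xi)|\le\rho\})\lesssim\rho^{\alpha}$ feeds directly into Lemma \ref{lem:low} with $m=1$ and exponent $\alpha$, yielding the dimension-part Marstrand--Mattila theorem of Theorem \ref{thm:new}; the endpoint $\alpha=1$ requires the weaker $\rho^{s}$ rate but still suffices after letting $s\uparrow 1$ in the energy integral.
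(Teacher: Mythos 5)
Your proposal is correct and follows essentially the same route as the paper: the paper's proof of this corollary is literally the one-line remark that one maps the planar sets of Orponen (for $0<\alpha<1$) and of \cite{Chend} (for $\alpha=1$) to the sphere exactly as in Lemma \ref{lem:ls}, so that the bad set $\{L\in G:|\pi_{L}(\xi)|\le\rho\}$ pulls back under the bi-Lipschitz map $F$ to the intersection of the planar set with a tube of width $\approx\rho$, and the tube estimates give $\rho^{s}$ (respectively $\rho^{\alpha}$). Your verification of the uniformity of the constants and the remark on how the estimate feeds into Lemma \ref{lem:low} are consistent with, and slightly more detailed than, what the paper records.
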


Theorem \ref{thm:new} follows by combining Corollary \ref{cor:nn}  and Lemma \ref{lem:low}.

%
%
%
%
%
%
%
%
%
%
%
%


\begin{thebibliography}{}

\bibitem{Chend} C. Chen. Distribution of random Cantor sets on tubes,  Ark. Mat. 54, Number 1 (2016), 39-54.  


\bibitem{ChenP} C. Chen, Projections in vector spaces over finite fields, to appear in Ann. Acad. Sci. Fenn. Math., arxiv.org/abs/1702.03648


\bibitem{ChenRF} C. Chen, Restricted families of projection in vector spaces over finite fields, arxiv.org/abs/1712.09335






\bibitem{Falconer2003} K. J. Falconer. Fractal Geometry: Mathematical Foundations and Applications,
John Wiley, 2nd Ed., 2003.

\bibitem{Falconer} K. Falconer, J. Fraser and X. Jin, Sixty Years of Fractal Projections, in Fractal Geometry and
Stochastics V, Vol. 70 of Progress in Probability, 3-25

\bibitem{FOO} K. F\"assler and T. Orponen. On restricted families of projections in $\RR^{3}$, Proc. London
Math. Soc. 109 (2014), 353-381




\bibitem{JJK} E. J\"arvenp\"a\"a, M. J\"arvenp\"a\"a, T. Keleti, Hausdorff dimension and non-degenerate
families of projections, to appear, J. Geom. Anal. (2014).


\bibitem{JJLL} E. J\"arvenp\"a\"a, M. J\"arvenp\"a\"a, F. Ledrappier and M. Leikas, One-dimensional families
of projections, Nonlinearity 21 (2008), 453-463.



\bibitem{Kaufman} R. Kaufman, On Hausdorff dimension of projections, Mathematika 15 (1968), 153-155

\bibitem{KOV}  A. K\"aenm\"aki, T. Orponen, and L. Venieri, A Marstrand-type restricted projection theorem in $\RR^{3}$,
preprint (2017), arXiv:1708.04859

\bibitem{OV}  T. Orponen, and L. Venieri,  Improved bounds for restricted families of projections to planes in  $\RR^{3}$,
preprint (2017), arxiv.org/abs/1711.08934

\bibitem{Marstrand}
J. M. Marstrand, Some fundamental geometrical properties of plane sets of fractional dimensions, Proc.
London Math. Soc.(3), 4, (1954), 257-302


\bibitem{Mattila1975} P. Mattila, Hausdorff dimension, orthogonal projections and intersections with planes,
Ann. Acad. Sci. Fenn. Ser. A I Math. 1 (1975), no. 2, 227-244.

\bibitem{Mattila1995} P. Mattila. Geometry of sets and measures in Euclidean spaces, Cambridge University
Press, Cambridge, 1995.

\bibitem{Mattila2015} P. Mattila, Fourier analysis and Hausdorff dimension, Cambridge Studies in Advanced Mathematics,
vol. 150, Cambridge University Press, 2015.

\bibitem{Mattila2017} P. Mattila, Hausdorff dimension, projections, intersections, and Besicovitch sets, arxiv.org/abs/1712.09199


\bibitem{M} T. Mitsis. A Stein–Tomas restriction theorem for general measures. Publ. Math.
Debrecen, 60 (2002), 89–99.



\bibitem{GM} G. Mockenhaupt. Salem sets and restriction properties of Fourier transforms.
Geometric and Functional Analysis, 10 (2000), 1579–1587.

\bibitem{OO} D.M. Oberlin and R. Oberlin. Application of a Fourier restriction theorem to certain
families of projections in $\RR^{3}$, to appear, J. Geom. Anal..


\bibitem{OrponenH} T. Orponen. Hausdorff dimension estimates for restricted families of projections in
$\RR^{3}$, Adv. Math. 275
(2015), 147-183

\bibitem{OOO} T. Orponen, On the tube-occupancy of sets in $\RR^{d}$. Int Math Res Notices (2015) (19): 9815-9831.




\bibitem{SS} P. Shmerkin and V. Suomala, Sets which are not tube null and intersection properties of random measures. J. London Math. Soc. 91 (2015), no. 2, 405-422.


\end{thebibliography}
\end{document}